\newcommand{\pdv}[3][]{\frac{\partial^{#1}#2}{\partial#3^{#1}}}
\newcommand{\dv}[3][]{\frac{\text{d}^{#1}#2}{\text{d}#3^{#1}}}
\newcommand{\R}{\mathbb{R}}
\newcommand{\V}{\mathbf{V}}
\newcommand{\U}{\mathbf{U}}
\newcommand{\A}{\mathbf{A}}
\newcommand{\B}{\mathbf{B}}
\newcommand{\s}{\mathbf{s}}
\renewcommand{\S}{\mathbf{S}}
\newcommand{\f}{\mathbf{f}}
\newcommand{\w}{\mathbf{w}}
\newcommand{\W}{\mathbf{W}}
\renewcommand{\H}{\mathbf{H}}
\newcommand{\Ur}{\mathbf{U}_{r}}
\renewcommand{\Pr}{\mathbf{P}_{r}}
\newcommand{\Phir}{\mathbf{\Phi}_{r}}
\newcommand{\cT}{\mathcal{T}}
\newcommand{\bT}{\mathbf{T}}
\newcommand{\vs}{s}
\newcommand{\vf}{f}
\newcommand{\vw}{w}
\newcommand{\va}{a}
\newcommand{\vh}{h}
\newtheorem{lemma}{Lemma}
\newtheorem{theorem}{Theorem}
\newtheorem{definition}{Definition}
\newtheorem{example}{Example}
\journal{Physica D}
\begin{document}

\begin{frontmatter}


\title{Lift \& Learn: \\ Physics-informed machine learning for \\large-scale nonlinear dynamical systems}



\author[eq]{Elizabeth Qian\corref{cor1}}
\author[bk]{Boris Kramer}
\author[bp]{Benjamin Peherstorfer}
\author[kw]{Karen Willcox}

\address[eq]{Center for Computational Engineering, Massachusetts Institute of Technology}
\address[bk]{Department of Mechanical and Aerospace Engineering, University of California San Diego}
\address[bp]{Courant Institute of Mathematical Sciences, New York University}
\address[kw]{Oden Institute for Computational Engineering and Sciences, University of Texas at Austin}

\begin{abstract}
	We present \emph{Lift \& Learn}, a physics-informed method for learning low-dimensional models for large-scale dynamical systems. The method exploits knowledge of a system's governing equations to identify a coordinate transformation in which the system dynamics have quadratic structure. This transformation is called a lifting map because it often adds auxiliary variables to the system state. The lifting map is applied to data obtained by evaluating a model for the original nonlinear system. This lifted data is projected onto its leading principal components, and low-dimensional linear and quadratic matrix operators are fit to the lifted reduced data using a least-squares operator inference procedure. 
	Analysis of our method shows that the Lift \& Learn models are able to capture the system physics in the lifted coordinates at least as accurately as traditional intrusive model reduction approaches. This preservation of system physics makes the Lift \& Learn models robust to changes in inputs. Numerical experiments on the FitzHugh-Nagumo neuron activation model and the compressible Euler equations demonstrate the generalizability of our model.
\end{abstract}

\begin{keyword}
data-driven model reduction\sep scientific machine learning \sep dynamical systems \sep partial differential equations \sep lifting map


\end{keyword}

\end{frontmatter}

\section{Introduction}
The derivation of low-dimensional models for high-dimensional dynamical systems from data is an important task that makes feasible many-query computational analyses like uncertainty propagation, optimization, and control.
Traditional model reduction methods rely on full knowledge of the system governing equations as well as the ability to intrusively manipulate solver codes. In contrast, classical machine learning methods fit models to data while treating the solver as a black box, ignoring knowledge of the problem physics. In this paper, we propose a new hybrid machine learning-model reduction method called Lift \& Learn, in which knowledge of the system governing equations is exploited to identify a set of lifted coordinates in which a low-dimensional model can be learned.

In projection-based model reduction, the system governing equations are  projected onto a low-dimensional approximation subspace to obtain a reduced model. The basis for the reduced space is computed from data; a common choice is the Proper Orthogonal Decomposition (POD) basis, comprised of the leading principal components of the data~\cite{lumley1967structure,sirovich87turbulence,berkooz1993proper,holmes2012}. However, the projection process is intrusive, requiring access to the codes that implement the high-dimensional operators of the original equations. One way to avoid the intrusive projection is to learn a map from input parameters to coefficients of the reduced basis, e.g., via gappy POD~\cite{bui2004aerodynamic,mifsud2019fusing}, or using radial basis functions~\cite{Nair2013}, a neural network~\cite{wang2019non,mainini2017data,swischuk2018physics} or a nearest-neighbors method~\cite{swischuk2018physics}. However, these approaches are agnostic to the dynamics of the system in that they do not model the evolution of the system state over time.

To learn models for the dynamics of a system, sparsity-promoting regression techniques are used in~\cite{brunton2016discovering,rudy2017data} to fit terms of governing equations from a dictionary of possible nonlinear terms. The work in~\cite{schaeffer2018extracting} also uses sparse regression to learn sparse, high-dimensional matrix operators from data. However, these approaches do not reduce the dimension of the system. To learn a reduced model, dynamic mode decomposition (DMD)~\cite{rowley2009spectral,schmid2010dynamic} projects data onto a reduced basis and then fits to the reduced data a linear operator. Similarly, the operator inference approach of~\cite{Peherstorfer16DataDriven} fits linear and polynomial matrix operators to reduced data. However, if the true underlying dynamics of the system are non-polynomial, then the DMD and operator inference models may be inaccurate.

Several communities have explored using variable transformations to expose structure in a nonlinear system. In~\cite{bouvrie2017kernel}, a nonlinear system is embedded in a reproducing kernel Hilbert space in order to use linear balanced truncation for control. Koopman operator theory, which states that every nonlinear dynamical system can be exactly described by an infinite-dimensional linear operator that acts on scalar observables of the system~\cite{koopman1932dynamical}, has been used to extend DMD to fit linear models for nonlinear systems in observable space defined by a dictionary~\cite{williams2015data}, kernel~\cite{kevrekidis2016kernel}, or dictionary learning method~\cite{li2017extended}. In contrast, \emph{lifting} transformations~\cite{mccormick1976computability} yield a finite-dimensional coordinate representation in which the system dynamics are quadratic~\cite{gu2011qlmor}. 
Unlike the Koopman operator, the existence of a finite-dimensional quadratic representation of the system dynamics is not universally guaranteed. However,  a large class of nonlinear terms that appear in PDEs in engineering applications can be lifted to quadratic form via the process in~\cite{gu2011qlmor}. Currently, the lifting must be specifically derived for each new set of equations, but once the quadratic system is obtained it admits an explicit model parametrization that we will exploit for learning, and it can be more easily analyzed than the original nonlinear system.
Lifting is exploited for model reduction in~\cite{gu2011qlmor,kramer2018lifting,benner2015two,bennergoyal2016QBIRKA}, where the quadratic operators of a high-dimensional lifted model are projected onto a reduced space to obtain a quadratic reduced model. However, in many settings, it is impossible or impractical to explicitly derive a high-dimensional lifted model, so the only available model is the original nonlinear one.

In \emph{Lift \& Learn}, we use the available nonlinear model to learn quadratic reduced model operators for the lifted system without requiring a high-dimensional lifted model to be available. We simulate state trajectory data by evaluating the original nonlinear model, lift the data, project the lifted data onto a low-dimensional basis, and fit reduced quadratic operators to the data using the operator inference procedure of~\cite{Peherstorfer16DataDriven}. In this way, we learn a quadratic model that respects the physics of the original nonlinear system. Our contributions are thus:
\begin{enumerate}
	\item the use of lifting to explicitly parametrize the reduced model in a form that can be learned using the operator inference approach of~\cite{Peherstorfer16DataDriven},
	\item the use of learning to non-intrusively obtain lifted reduced models from data generated by the original nonlinear model, enabling their use even in settings where lifted models are unavailable, and
	\item the exploitation of the preservation of system physics to prove guarantees about the lifted model fit to data.
\end{enumerate}
Our approach fits reduced quadratic operators to the data in state space. In the situation where frequency-domain data is available, the work in~\cite{gosea2018LoewnerQB} fits quadratic operators to data in frequency space. 
\Cref{sec: background} describes projection-based model reduction for nonlinear systems. \Cref{sec: lift and learn} defines the lifting map and its properties and introduces our Lift \& Learn model learning method. \Cref{sec: theory} derives a bound on the mismatch between the data and the Lift \& Learn model dynamics.
In \Cref{sec: results} we apply our method to two examples: the FitzHugh-Nagumo prototypical neuron activation model and the Euler fluid dynamics equations. Our numerical results demonstrate the learned models' reliability and ability to generalize outside their training sets. 

\section{Projection-based model reduction}\label{sec: background}
Let $\Omega\in\R^d$ denote a physical domain and let $[0,T_{\text{final}}]$ be a time domain for some final time $T_{\text{final}}>0$. The nonlinear partial differential equation (PDE)
\begin{align}
	\pdv{\vs }t = \vf\left(\vs\right)
	\label{eq: nonlinear PDE}
\end{align}
defines a dynamical system for the $d_s$-dimensional vector state field
\begin{align}
	\vs(x,t) = \begin{pmatrix}
		s_1(x,t) \\ \vdots \\ s_{d_s}(x,t)
	\end{pmatrix},	
\end{align} 
where $s_j : \Omega \times [0,T_{\text{final}})\rightarrow \mathcal{S}_j\subset\R$, for $j =1,2,\ldots, d_s$, and where
\begin{align}
	\vf(\vs) = \begin{pmatrix}
		f_1(\vs) \\ \vdots \\ f_{d_s}(\vs)
	\end{pmatrix}
	\label{eq: vec f def}
\end{align}
is a differentiable nonlinear function that maps the state field to its time derivative. We assume that $\vs$ is sufficiently smooth so that any spatial derivatives of $\vs$ appearing in $\vf$ are well-defined. Denote by $\mathcal{S}$ the $d_s$-dimensional product space $\mathcal{S} = \mathcal{S}_1\times\cdots\times\mathcal{S}_{d_s}$, so that $\vs(x,t)\in\mathcal{S}$.

We consider a semi-discrete model where the spatially discretized state vector $\s(t)\in\R^{nd_s}$ corresponds to the values of the $d_s$ state variables at some collection of spatial points $\{x_l\in\Omega\}_{l=1}^n$, e.g., in a finite difference discretization or a finite element setting with a nodal basis. Let $\mathcal{S}^n$ denote the product domain $\mathcal{S}^n = \Pi_{l=1}^n \mathcal{S}$.
Then, the semi-discrete full model is given by a system of $nd_s$ ordinary differential equations:
\begin{align}
	\dv{\s}t = \f(\s),
  \label{eq: full model}
\end{align}
where $\f:\mathcal{S}^n\to\mathcal{S}^n$ is a function that discretizes $\vf$. The discrete nonlinear operator $\f$ is assumed to be Lipschitz continuous so that \cref{eq: full model} admits a unique solution. Projection-based model reduction seeks a low-dimensional approximation to \cref{eq: full model} to achieve computational speed-ups.

Denote by $\s_k$ the state snapshot at time $t_k$, i.e., the solution of \cref{eq: full model} at time $t_k$. The state snapshot matrix, $\S$, collects $K$ snapshots as
\begin{align}
	\S = \begin{bmatrix}
		\s_1 & \cdots & \s_K
	\end{bmatrix} \in \R^{nd_s\times K}.
	\label{eq: original full model data}
\end{align}
Note that $\S$ can contain states from multiple full model evaluations, e.g., from different initial conditions. 
The singular value decomposition (SVD) of $\S$ is given by
\begin{align}
	\S = \mathbf{\Phi}\mathbf{\Xi}\mathbf{\Psi}^\top
\end{align}
where $\mathbf{\Phi}\in\R^{nd_s\times nd_s}$, $\mathbf{\Xi}\in\R^{nd_s\times nd_s}$, and $\mathbf{\Psi}\in\R^{nd_s\times K}$, and we assume that the singular values $\xi_1\geq\xi_2\geq\cdots$ in $\mathbf{\Xi}$ are in descending order. The Proper Orthogonal Decomposition (POD) basis of size $r$ is denoted by $\Phir$ and defined by the leading $r$ columns of $\mathbf{\Phi}$. The POD state approximation is $\s \approx \Phir\hat\s$, where $\hat\s\in\R^r$ is the reduced state. The POD reduced model is defined by Galerkin projection: 
\begin{align}
	\dv{\hat\s}t = \Phir^\top\f\left(\Phir\hat\s\right).
	\label{eq: POD reduced model}
\end{align}
When $\f$ has no particular structure, solving \cref{eq: POD reduced model} requires evaluating the $nd_s$-dimensional map $\f$, which is expensive. 
When the full model contains only polynomial nonlinearities in the state, however, the POD-Galerkin model preserves this structure and can be efficiently evaluated without recourse to high-dimensional quantities. That is, let
\begin{align}
	\pdv{\vs}t = \va(\vs) + \vh(\vs)
	\label{eq: linear quadratic PDE}
\end{align}
where $\va(\cdot)$ and $\vh(\cdot)$ are linear and quadratic operators, respectively, and let 
\begin{align}
	\dv{\s}t = \A\s + \H(\s\otimes\s)
	\label{eq: quadratic full model}
\end{align}
be a discretization of~\cref{eq: linear quadratic PDE},
where $\A\in\R^{nd_s\times nd_s}$ is a linear matrix operator, $\H\in\R^{nd_s\times n^2d_s^2}$ is a matricized tensor operator, and $\otimes$ denotes a column-wise Kronecker product. That is, for a column vector $\mathbf{b} = [b_1, b_2, \ldots, b_m]^\top\in\R^m$, $\mathbf{b}\otimes\mathbf{b}$ is given by
\begin{align}
	\mathbf{b} \otimes \mathbf{b} = \begin{bmatrix}
		b_1^2 & b_1b_2 & \cdots & b_1b_m & b_2b_1 & b_2^2 & \cdots b_2b_m &\cdots & \cdots & b_m^2
	\end{bmatrix}^\top\in \R^{m^2},
	\label{eq: column definition of khatri-rao}
\end{align}
and for a matrix $\mathbf{B} = [\mathbf{b}_1, \mathbf{b}_2,\ldots \mathbf{b}_K]\in\R^{m\times K}$, $\mathbf{B}\otimes\mathbf{B}$ is given by
\begin{align}
	\mathbf{B}\otimes\mathbf{B} = \begin{bmatrix}
		\mathbf{b}_1\otimes\mathbf{b}_1 & \mathbf{b}_2\otimes\mathbf{b}_2 & \dots & \mathbf{b}_K\otimes\mathbf{b}_K
	\end{bmatrix}\in\R^{m^2\times K}.
	\label{eq: matrix definition of khatri-rao}
\end{align}
The POD-Galerkin reduced model is then given by:
\begin{align}
	\dv{\hat\s}t = \hat\A\hat\s + \hat\H(\hat\s\otimes\hat\s),
	\label{eq: lifted POD reduced model}
\end{align}
where 
\begin{align}
	\hat\A = \Phir^\top\A\Phir, \quad \hat\H = \Phir^\top\H(\Phir\otimes\Phir)
	\label{eq: intrusive reduced matrix operators}
\end{align}
are reduced matrix operators. The cost of evaluating \cref{eq: lifted POD reduced model} depends only on the reduced dimension $r$. However, in many cases, including in our setting, the high-dimensional operators $\A$ and $\H$ are not available, so the reduced matrix operators cannot be computed as in~\cref{eq: intrusive reduced matrix operators} and must be obtained through other means.

\section{Lift \& Learn: Physics-informed learning for nonlinear PDEs}\label{sec: lift and learn}
Lift \& Learn is a method for learning quadratic reduced models for dynamical systems governed by nonlinear PDEs. The method exposes structure in nonlinear PDEs by identifying a lifting transformation in which the PDE admits a quadratic representation. Non-quadratic state data is obtained by evaluating the original nonlinear model (\cref{eq: full model}) and the lifting transformation is applied to this data. Quadratic reduced model operators are then fit to the transformed data. The result is an efficiently evaluable quadratic reduced model for the original nonlinear PDE.

\Cref{sec: lifting} introduces lifting transformations and describes their properties.
\Cref{ssec: data acquisition} describes how lifted reduced state data are obtained from a full model simulation in the original non-polynomial system variables. \Cref{sec: opinf} introduces the operator inference procedure of~\cite{Peherstorfer16DataDriven} used to learn the reduced model from the lifted data. \Cref{ssec: LL summary} summarizes the Lift \& Learn method.

\subsection{Exposing structure via lifting transformations}\label{sec: lifting}
We begin by introducing two new definitions for the lifting map and its reverse. These definitions are part of our contribution and form the basis of our theoretical analysis in Section~\ref{sec: theory}.
\begin{definition}
	Define the lifting map,
	\begin{align}
 		\cT: \mathcal{S} \to \mathcal{W} \subset \mathbb{R}^{d_w}, \quad d_w\geq d_s,
 		\label{eq: lifting}
	\end{align}
	and let $\vw(x,t) = \cT(\vs(x,t))$. $\cT$ is a \emph{quadratic lifting} of \cref{eq: nonlinear PDE} if the following conditions are met:
	\begin{enumerate}
	 	\item  the map $\cT$ is differentiable with respect to $\vs$ with bounded derivative, i.e., if $\mathcal{J}(\vs)$ is the Jacobian of $\cT$ with respect to $\vs$, then 
	 	\begin{align}
	 		\sup_{\vs\in\mathcal{S}}\|\mathcal{J}(\vs)\| \leq c,
	 	\end{align}
	 	for some $c>0$, and
	 	\item the lifted state $\vw$ satisfies
		\begin{align}
			\pdv{\vw}t= \va(\vw) + \vh(\vw),
			\label{eq: lifted PDE}			
		\end{align}
		where 
			\begin{align}
				\va(\vw) = \begin{pmatrix}
					a_1(\vw) \\ \vdots \\ a_{d_w}(\vw)
				\end{pmatrix},
				\qquad
				\vh(\vw) = \begin{pmatrix}
					h_1(\vw) \\ \vdots \\ h_{d_w}(\vw)
				\end{pmatrix},
			\end{align}
		for some linear functions $a_j$ and quadratic functions $h_j$, $j=1,2,\ldots, d_w$.
	\end{enumerate} 
	The $d_w$-dimensional vector field $\vw(x,t)$ is called the \emph{lifted state} and \cref{eq: lifted PDE}
	is the lifted PDE. 
	\label{def: lifting map}
\end{definition}
Note that this definition may be extended to PDEs that contain constant and input-dependent terms --- see \Cref{sec: FHN} for an example. Lifting maps that transform non-polynomial dynamics into higher-order polynomial dynamics are also possible, but we focus on the quadratic case in this work. We now define a reverse lifting map which embeds the lifted state in the original state space.

\begin{definition}
	Given a lifting map $\cT$, a map
	\begin{align}
		\cT^\dagger: \mathcal{W} \to \mathcal{S}
	 	\label{eq: reverse lifting}
	\end{align}
	is called a reverse lifting map if it is differentiable with respect to $\vw$ with bounded derivative on $\mathcal{W}$ and satisfies $\cT^\dagger(\cT(\vs)) = \vs$ for all $\vs\in\mathcal{S}$.
	\label{def: reverse lifting}
\end{definition}

We now present a simple illustrative example of lifting.

\begin{example}
	Consider the nonlinear PDE,
	\begin{align}
		\pdv st = -e^s.
		\label{eq: toy nonlinear PDE}
	\end{align}
	To lift \cref{eq: toy nonlinear PDE}, we define the auxiliary variable $-e^s$. That is, the lifting map and its reverse map are given by
\begin{align}
	\cT: s \mapsto \begin{pmatrix}
		s \\ -e^s
	\end{pmatrix}\equiv \begin{pmatrix}
		w_1 \\ w_2
	\end{pmatrix}=\vw, \qquad \cT^\dagger: \vw \mapsto w_1,
\end{align}so that the lifted system is quadratic:
\begin{align}
 	\pdv{}t \begin{pmatrix}
 		w_1 \\ w_2
 	\end{pmatrix} = \begin{pmatrix}
 		1 \\ -e^s
 	\end{pmatrix}\pdv st = \begin{pmatrix}
 		1 \\ -e^s
 	\end{pmatrix}(-e^s) = 
 	\begin{pmatrix}
 		w_2 \\ (w_2)^2
 	\end{pmatrix}.
\label{ex: toy lifting}
\end{align} 
\end{example}

The lifting map must be specifically derived for the problem at hand. One strategy for doing so is to introduce auxiliary variables for the non-quadratic terms in the governing PDE and augment the system with evolution equations for these auxiliary variables~\cite{gu2011qlmor}. The work in~\cite{gu2011qlmor} shows that a large class of nonlinear terms which appear in engineering systems may be lifted to quadratic form in this way with $d_w$ of the same order of magnitude as $d_s$, including monomial, sinusoidal, and exponential terms.
In~\cite{kramer2018lifting}, this strategy is used to lift PDEs that govern systems in neuron modeling and combustion to quadratic form.
Note that $\cT$ is generally non-unique (consider $w_2 = e^s$ in \Cref{ex: toy lifting}), and for a given $\cT$, the reverse lifting map is also non-unique. 
Because $\cT$ is non-invertible in general, the choice of $\cT^\dagger$ can be interpreted as a regularization choice that allows the lifted state to be continuously embedded in the original state space. Any choices of $\cT$ and $\cT^\dagger$ satisfying \Cref{def: lifting map,def: reverse lifting} can be used in our Lift \& Learn approach. However, each additional lifted variable introduced will increase the dimension of the learning problem, so liftings that introduce fewer auxiliary variables are preferred. The questions of whether there is a minimal number of auxiliary variables needed to obtain a quadratic system and how to derive this minimal lifting remain an open problem.

\subsection{Lifted training data}\label{ssec: data acquisition}
This section presents a method for obtaining data in the lifted variables from the available non-lifted model (\cref{eq: full model}).

\paragraph{State data}
We first collect original snapshot data by simulating the original full model~\cref{eq: full model}. Then, for each column of the data matrix (\cref{eq: original full model data}), we apply the lifting map node-wise to the discrete state to obtain lifted snapshot data. That is, the lifted data matrix $\W\in\R^{nd_w\times K}$ is given by
\begin{align}
	\W = \begin{bmatrix}
		\w_{1} & \cdots & \w_{K}
	\end{bmatrix} = 
	\begin{bmatrix}
		\bT(\s_1) & \cdots & \bT(\s_K)
	\end{bmatrix},
	\label{eq: lifted data matrix}
\end{align}
where $\bT$ denotes the discrete lifting map defined by applying $\cT$ node-wise to each spatial node.

We denote the SVD of the transformed data by $\W = \U\mathbf{\Sigma}\V^\top$, with $\U\in\R^{nd_w\times nd_w}$, $\mathbf{\Sigma}\in\R^{nd_w\times nd_w}$, $\V\in\R^{nd_w\times K}$, where the singular values $\sigma_1\geq\sigma_2\geq\cdots$ in $\mathbf{\Sigma}$ are in descending order. The $r$-dimensional POD basis matrix is given by the leading $r$ columns of $\U$, denoted $\Ur\in\R^{nd_w\times r}$.
Projection onto $\Ur$ yields state data in the reduced space: 
\begin{align}
  \hat\W = \Ur^\top\W = \begin{bmatrix}
  	\hat\w_1 & \cdots & \hat\w_K
  \end{bmatrix}\in\R^{r\times K}.
  \label{eq: reduced transformed state data}
\end{align}

\paragraph{Time derivative data}
To learn the matrix operators of a lifted reduced model of the form in \cref{eq: lifted POD reduced model}, reduced state time derivative data are also required. To obtain data for dynamics that are Markovian in the reduced state, we adapt the procedure in~\cite{P19ReProj} to the Lift \& Learn setting.
For each $\hat\w_k$, 
\begin{align}
	\w_{\text{proj},k} = \Ur\hat\w_k = \Ur\Ur^\top \w_k\in\R^{nd_w}
\end{align} 
denotes the projection of the lifted state onto the subspace spanned by the POD basis. Denote by $\bT^\dagger$ the discrete reverse lifting defined by applying $\cT^\dagger$ node-wise for each spatial node. We assume that $\Ur$ is a sufficiently rich basis that $\bT^\dagger$ is well-defined for $\w_{\text{proj},k}$, and reverse the lifting for all $k$ to obtain a discrete non-lifted state that corresponds to the projected lifted state:
\begin{align}
 	\s_{\text{proj},k} = \bT^\dagger(\w_{\text{proj},k}).
\end{align} 
We then evaluate the available nonlinear full model to obtain new time derivative data, which we denote $\s_{\text{proj},k}'$:
\begin{align}
	\s_{\text{proj},k}'=\f\left(\bT^\dagger\left(\w_{\text{proj},k}\right)\right).
	\label{eq: sproj velocity}
\end{align}
Let $\mathbf{J}(\s)\in\R^{nd_w\times nd_s}$ denote the Jacobian of $\bT$ with respect to $\s$.
Applying the chain rule to~\cref{eq: sproj velocity} yields a time derivative in the discrete lifted state:
\begin{align}
	\w_{\text{proj},k}' = \mathbf{J}(\s_{\text{proj},k})\s_{\text{proj},k}'.
	\label{eq: lifted HD reprojected velocity}
\end{align}
Time derivative data in the reduced space is then obtained by projecting \cref{eq: lifted HD reprojected velocity} back onto the POD basis:
\begin{align}
	\hat\w'_k = \Ur^\top\mathbf{J}(\s_{\text{proj},k})\s_{\text{proj},k}' = \Ur^\top\left(\mathbf{J}(\bT^\dagger(\Ur\Ur^\top\w_k))\f\left(\bT^\dagger(\Ur\Ur^\top\w_k)\right)\right).
	\label{eq: lifted reprojected velocity reduced}
\end{align}
The reduced time derivative data matrix collects this data:
\begin{align}
	\hat\W' = \begin{bmatrix}
		\hat\w_1' & \cdots & \hat\w_K'
	\end{bmatrix}.
	\label{eq: reduced time derivative data}
\end{align}

\subsection{Least-squares operator inference procedure}\label{sec: opinf}

Given $K$ reduced state snapshots (\cref{eq: reduced transformed state data}) and the corresponding reduced time derivative data (\cref{eq: reduced time derivative data}) and a postulated model form (\cref{eq: lifted POD reduced model}), operator inference~\cite{Peherstorfer16DataDriven} formulates the following minimization problem for learning the matrix operators of \cref{eq: lifted POD reduced model}:
\begin{align}
  \min_{\hat\A\in\R^{r\times r},\hat\H\in\R^{r\times r^2}} \frac1K \left\| \hat\W^{\top}\hat\A^\top + \left(\hat\W\otimes\hat\W\right)^\top\hat\H^\top - {\hat\W}'^\top \right\|_F^2
  \label{eq: transposed minimization},
\end{align}
where $\otimes$ is the column-wise Kronecker product as defined in \cref{eq: matrix definition of khatri-rao}. Note that the square of the Frobenius norm of a matrix is equivalent to the sum of 2-norms of its rows. Thus, \cref{eq: transposed minimization} corresponds to an empirical risk minimization of the sum of square losses for each snapshot from $k=1,\ldots,K$.
Each column of $\hat\W^\top$, ${\hat\W}'^\top$, and $(\hat\W\otimes\hat\W)^\top$ corresponds to a single component of the reduced state $\hat\w$, yielding $r$ independent least-squares problems which each define one row of $\hat\A$ and $\hat\H$. Each least-squares problem has $r+\frac{r(r+1)}2$ degrees of freedom when the structural redundancy of the Kronecker product is accounted for. 

In practice, to solve \cref{eq: transposed minimization}, we form a data matrix,
\begin{align}
	\hat{\mathbf{D}} = \begin{pmatrix}
		\hat\W^\top & \hat\W_\text{sq}^\top
	\end{pmatrix}\in \R^{K \times \left(r+\frac{r(r+1)}2\right)},
\end{align}
where $\hat\W_\text{sq}\in \R^{\frac{r(r+1)}2\times r}$ contains quadratic data with the redundant cross terms of the Kronecker product removed. We then solve the least-squares equation,
\begin{align}
	\hat{\mathbf{D}}\begin{pmatrix}
		\hat\A^\top \\ \hat\H_\text{sq}^\top
	\end{pmatrix} = \hat\W',
	\label{eq: block LS problem}
\end{align}
where $\hat\H_\text{sq}\in\R^{r\times \frac{r(r+1)}2}$ contains coefficients of quadratic terms without redundancy, and we reconstruct the symmetric tensor $\hat\H\in\R^{r\times r^2}$ by splitting the coefficients for quadratic cross terms in $\hat\H_\text{sq}$ across the redundant terms. As long as $\hat{\mathbf{D}}$ has full column rank, \cref{eq: block LS problem} has a unique solution~\cite{golub1996matrix}. The work in~\cite{Peherstorfer16DataDriven} offers a data collection strategy for avoiding ill-conditioning in $\hat{\mathbf{D}}$. Normalizing the columns of $\hat{\mathbf{D}}$ can also improve its conditioning. In the numerical experiments of \Cref{sec: results}, a QR solver is used to solve \cref{eq: block LS problem}. 

The Lift \& Learn reduced model is then given by
\begin{align}
	\dv{\hat \w}t = \hat\A\hat\w + \hat\H (\hat\w\otimes\hat\w).
\end{align}
Note that the linear-quadratic case is considered for simplicity and concreteness, but the operator inference procedure can be flexibly adapted to learn reduced models with constant, input-dependent, and higher-order polynomial terms. \Cref{sec: FHN} contains an example in which input operators are learned.

\subsection{Lift \& Learn: Method summary}\label{ssec: LL summary}
\begin{algorithm}[t]
  \caption{Lift \& Learn}
  \label{alg:transform-learn}
  \begin{algorithmic}[1]
  \STATE{Use knowledge of governing PDE to identify lifting map $\cT$ as in \Cref{sec: lifting}.}
  \STATE{Evaluate non-quadratic full model (\cref{eq: full model}) to obtain state data in the original, non-lifted variables, $\S$.}
  \STATE{Use $\bT$ (defined by $\cT$) to transform nonlinear state data $\S$ to lifted data $\W$.}
  \STATE{Compute POD basis $\Ur$ for the lifted state data.}
  \STATE{Project to obtain reduced lifted state data (\cref{eq: reduced transformed state data}) and reduced lifted time derivative data (\cref{eq: reduced time derivative data}) as in \Cref{ssec: data acquisition}. }
  \STATE{Solve least-squares minimization (\cref{eq: transposed minimization}) using lifted data to infer operators $\hat\A$ and $\hat\H$.}
  \end{algorithmic}
\end{algorithm}

The Lift \& Learn method is summarized in \Cref{alg:transform-learn}. The first step is to identify an appropriate lifting transformation as described in \Cref{sec: lifting}. The available full model in the original non-lifted variables is then used to obtain lifted reduced state data as described in \Cref{ssec: data acquisition}. The operator inference framework in \Cref{sec: opinf} is then employed to learn a quadratic reduced model. Note that although this paper has primarily considered the case where the governing physics are described by nonlinear PDEs, the approach applies equally to systems where the governing equations are high-dimensional nonlinear ODEs.

\section{Finite difference analysis}\label{sec: theory}
This section presents analysis of the Lift \& Learn method in the setting where the data come from a consistent finite difference discretization of the original nonlinear PDE with no noise. \Cref{sec: FD setting} presents the setting of our analysis and \Cref{sec: FD error} proves an upper bound on the residual of the Lift \& Learn minimization.

\subsection{Consistent finite difference models}\label{sec: FD setting}
We now provide a local error analysis for the setting where \cref{eq: full model} arises from a consistent finite-difference discretization of the state. Let $\bar\s$ denote the vector of original state values at the grid points $\{x_l\}_{l=1}^n$:
\begin{align}
	\bar\s(t) = \begin{pmatrix}
		s_1(x_1,t) \\ \vdots \\  s_1(x_n,t) \\ 
		\vdots \\
		s_{d_s}(x_1,t) \\ \vdots \\ s_{d_s}(x_n,t)
	\end{pmatrix}.
\end{align}
Note that $\bar\s(t)$ differs from $\s(t)$ in that $\bar\s(t)$ is the exact continuous (strong) solution of the original PDE (\cref{eq: nonlinear PDE}) evaluated at the grid points, and $\s(t)$ is the semi-discrete solution of the spatially discrete set of ODEs (\cref{eq: full model}). 
If \cref{eq: full model} is a consistent order-$p$ discretization of \cref{eq: nonlinear PDE}, then for any given $n$ there exists a constant $c_s>0$ such that
\begin{align}
	\left|\f_{(j-1)n+l}(\bar \s(t)) -  f_j(\vs(x,t))\big|_{x=x_l} \right| \leq c_s n^{-p}, \quad \forall t,
	\label{eq: f consistency}
\end{align}
for $j=1,2,\ldots,d_s$ and $l=1,2,\ldots,n$, where $ f_j$ is defined as in \cref{eq: vec f def} and $\f_{(j-1)n+l}$ denotes the $((j-1)n+l)$-th entry of the vector-valued nonlinear function $\f$, which corresponds to the time derivative of $s_j(x_l,t)$.

Now, let $\bar\w$ denote the discretization of the exact continuous lifted state on the same spatial grid:
\begin{align}
	\bar\w(t) = \bT(\bar\s(t)) = \begin{pmatrix}
		\cT_1(\vs(x_1,t)) \\ \vdots \\ \cT_1(\vs(x_n,t)) \\ 
		\vdots \\
		\cT_{d_w}(\vs(x_1,t) )\\ \vdots \\ \cT_{d_w}(\vs(x_n,t))
	\end{pmatrix}.
	\label{eq: bar bw def}
\end{align}
Then, 
\begin{align}
	\dv{\w}t = \A\w + \H(\w\otimes\w), \quad \A\in\R^{nd_w\times nd_w},\,\H\in\R^{nd_w\times n^2d_w^2}
	\label{eq: lifted ODEs}
\end{align}
is a consistent order-$p$ discretization of \cref{eq: lifted PDE} if for any given $n$ there exists a constant $c_w>0$ such that, for all $t$,
\begin{align}
	\left | \A_{(j-1)n+l,:}\bar\w + \H_{(j-1)n+l,:}(\bar\w\otimes\bar\w) - \left(a_j(\vw) + h_j(\vw)\right)\big|_{x=x_l} \right| \leq c_wn^{-p},
	\label{eq: A H consistency}
\end{align}
for $j=1,2,\ldots,d_w$ and $l=1,2,\ldots,n$, where $\A_{(j-1)n+l,:}$ and $\H_{(j-1)n+l,:}$ denote the $((j-1)n+l)$-th rows of $\A$ and $\H$, respectively, which correspond to the time derivative of the $j$-th lifted state at the $l$-th spatial node.

We note that in Lift \& Learn, we assume that the discretized nonlinear model (\cref{eq: full model}) is available, and that it is stable. In contrast, for the lifted system, we assume only that the discrete operators $\A$ and $\H$ for a consistent discretization exist --- they are used for analysis only, and availability is not required.

\subsection{Theoretical results}\label{sec: FD error}
We now prove that the minimum achieved by the Lift \& Learn model in \cref{eq: transposed minimization} is bounded above by the objective value achieved by the intrusive lifted reduced model. This demonstrates two advantages of our method:
\begin{enumerate}
	\item our non-intrusively obtained model is able to model the data at least as well as an intrusive reduced model, and 
	\item unlike many other learning methods, the quadratic form of the model we learn respects the model physics, enabling us to put an upper bound on the residual of the Lift \& Learn model on the training data.
\end{enumerate}

We begin with a consistency result.
\begin{lemma}
	If $\bar\w(t)$ is defined as in \cref{eq: bar bw def}, and if \cref{eq: full model} and \cref{eq: lifted ODEs} are consistent discretizations of \cref{eq: nonlinear PDE} and \cref{eq: lifted PDE}, respectively, then 
	\begin{align}
		\left\|\A\bar\w + \H(\bar\w \otimes\bar\w) - \mathbf{J}(\bT^\dagger(\bar \w))\f(\bT^\dagger(\bar\w))\right\|_2 \lesssim n^{\frac12-p},\quad \forall t,
	\end{align}
	where $\lesssim$ denotes a bound up to a constant independent of $n$.
	\label{lem: discrete flow map bound}
\end{lemma}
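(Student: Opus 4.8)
The plan is to prove the bound entrywise and then aggregate the $nd_w$ components into the $2$-norm, which is exactly what produces the $\sqrt{n}$ factor. First I would exploit the reverse-lifting property $\cT^\dagger(\cT(\vs)) = \vs$ from \Cref{def: reverse lifting}: since $\bar\w = \bT(\bar\s)$, this gives $\bT^\dagger(\bar\w) = \bar\s$, so the quantity to be bounded simplifies to $\|\A\bar\w + \H(\bar\w\otimes\bar\w) - \mathbf{J}(\bar\s)\f(\bar\s)\|_2$. The two terms inside are both discrete approximations of one and the same exact continuous velocity, and the strategy is to show each is within $O(n^{-p})$ of it.

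The conceptual heart is a continuous chain-rule identity. Differentiating $\vw = \cT(\vs)$ in time and substituting the original PDE $\partial\vs/\partial t = \vf(\vs)$ gives $\partial\vw/\partial t = \mathcal{J}(\vs)\vf(\vs)$ pointwise in $x$; comparing with the lifted PDE (\cref{eq: lifted PDE}) yields $\va(\vw) + \vh(\vw) = \mathcal{J}(\vs)\vf(\vs)$ at every spatial point. This exact relation is what both discrete terms approximate, and it is the pivot that makes the cancellation work.

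Next I would bound a single entry indexed by $(j-1)n+l$. For the first term, consistency of the lifted discretization (\cref{eq: A H consistency}) gives $[\A\bar\w + \H(\bar\w\otimes\bar\w)]_{(j-1)n+l} = (a_j(\vw)+h_j(\vw))|_{x=x_l} + O(n^{-p})$. For the second term I would use that $\bT$ acts node-wise, so $\mathbf{J}(\bar\s)$ is block-diagonal across spatial nodes with block $\mathcal{J}(\vs(x_l))$, giving $[\mathbf{J}(\bar\s)\f(\bar\s)]_{(j-1)n+l} = \sum_{i=1}^{d_s}\frac{\partial\cT_j}{\partial s_i}(\vs(x_l))\,\f_{(i-1)n+l}(\bar\s)$. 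Substituting the $\f$-consistency $\f_{(i-1)n+l}(\bar\s) = f_i(\vs)|_{x=x_l} + O(n^{-p})$ (\cref{eq: f consistency}), invoking the continuous identity, and using the bound $\sup_\vs\|\mathcal{J}(\vs)\|\le c$ from \Cref{def: lifting map} shows this term also equals $(a_j(\vw)+h_j(\vw))|_{x=x_l} + O(n^{-p})$, where the error absorbs the $d_s$-fold sum of bounded-Jacobian times $O(n^{-p})$ factors (all of $d_s$, $c$, $c_s$ being constants in $n$). Subtracting, the exact continuous velocity cancels and each entry of the difference is bounded in absolute value by a constant times $n^{-p}$, uniformly in $j$, $l$, and $t$.

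Finally, since the vector has $nd_w$ entries each of magnitude $O(n^{-p})$ and $d_w$ is fixed, its $2$-norm is at most $\sqrt{nd_w}\cdot O(n^{-p}) = O(n^{1/2 - p})$, the claimed bound. I expect the main obstacle to be the bookkeeping in the second term: establishing the block structure of $\mathbf{J}$ under the node-wise lifting and then propagating the $\f$-consistency error through the Jacobian without it accumulating, which is precisely where the uniform bound $\|\mathcal{J}\|\le c$ is essential.
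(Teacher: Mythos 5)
Your proposal is correct and follows essentially the same route as the paper's own proof: exploit the node-wise (block-diagonal) structure of $\mathbf{J}$, compare both discrete terms entrywise to the exact continuous lifted velocity via the consistency assumptions and the chain-rule identity $\mathcal{J}(\vs)\vf(\vs) = \va(\vw)+\vh(\vw)$, and aggregate the $nd_w$ entries of size $O(n^{-p})$ into the $O(n^{1/2-p})$ two-norm bound. The only cosmetic difference is that you sum the Jacobian contraction over the $d_s$ original state components (which is the natural indexing) where the paper writes the sum over $d_w$; this does not affect the argument.
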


\begin{proof}
	Because $\bT$ is defined by applying $\cT$ node-wise, the $((j-1)n+l)$-th row of $\mathbf{J}$ contains partial derivatives of the $j$-th lifted state at the $l$-th spatial node with respect to $\s$. This corresponds to the $j$-th row of $\mathcal{J}$ evaluated at $l$-th spatial node. That is, for all $t$,
	\begin{align}
		\mathbf{J}_{(j-1)n+l,:}(\bar\s)\f(\bar\s) &= \sum_{i=1}^{d_w}\mathbf{J}_{(j-1)n+l\,,\,(i-1)n+l}\f_{(i-1)n+l}(\bar \s) \\
		&= \sum_{i=1}^{d_w}\mathcal{J}_{j,i}(\vs(x_l))\f_{(i-1)n+l}(\bar \s) ,
	\end{align}
	for all $j=1,2,\ldots, d_w$, $l=1,2,\ldots,n$. Then, for all $t$,
	\begin{align}
		\bigg |\mathbf{J}_{(j-1)n+l,:}&(\bar\s)\f(\bar\s) - \mathcal{J}_j(\vs(x_l)) \vf(\vs)\big)\big|_{x=x_l} \bigg | \nonumber\\
		&= \left|\sum_{i=1}^{d_w} \left(\mathcal{J}_{j,i}(\vs(x_l)) \f_{(i-1)n+l}(\bar\s) - \mathcal{J}_{j,i}(\vs(x_l))\vf_i(\vs)|_{x=x_l}\right) \right| \\
		&\leq \sum_{i=1}^{d_w} \big |\mathcal{J}_{j,i}(\vs(x_l)) \big | \cdot \big| \f_{(i-1)n+l}(\bar\s) - \vf_i(\vs)|_{x=x_l} \big|.
	\end{align}
	Since $\cT$ is has bounded derivative, $\mathcal{J}$ is bounded. Then, because $\f$ is consistent, we have, for all $t$,
	\begin{align}
		\bigg |\mathbf{J}_{(j-1)n+l,:}&(\bar\s)\f(\bar\s) - \mathcal{J}_j(\vs(x_l)) \big(\vf(\vs)\big)\big|_{x=x_l} \bigg | \lesssim n^{-p}.
		\label{eq: discrete F diff cts f}
	\end{align}
	By definition, the lifted dynamics are exact for $\vw = \cT(\vs)$, so for all $j,k$, and $t$,
	\begin{align}
		\mathcal{J}_j(\vs(x_l))\big(\vf(\vs)\big)\big|_{x=x_l} = \left.\left(a_j(\vw) + h_j(\vw)\right)\right|_{x=x_l}.
		\label{eq: lifting dynamics equal}
	\end{align}
	Since $\bar\s = \bT^\dagger(\bar\w)$, we can use the triangle inequality to combine \cref{eq: A H consistency,eq: discrete F diff cts f,eq: lifting dynamics equal} as follows:
	\begin{align}
		\left |\A_{(j-1)n+l,:}\bar\w + \H_{(j-1)n+l,:}(\bar\w\otimes\bar\w) - \mathbf{J}_{(j-1)n+l,:}(\bT^\dagger(\bar\w))\f(\bT^\dagger(\bar\w)) \right | \lesssim n^{-p}
	\end{align}
	for all $j = 1,2,\ldots, d_w$, $l = 1,2,\ldots,n$, and $t$. Since there are $nd_w$ discrete lifted states, we have for all $t$ the following bound (up to a constant):
	\begin{align}
		\left\|\A\bar\w + \H(\bar\w \otimes\bar\w) - \mathbf{J}(\bT^\dagger(\bar\w))\f(\bT^\dagger(\bar\w))\right\|_2 \lesssim n^{-(p-\frac12)}.
	\end{align}
\end{proof}

\Cref{lem: discrete flow map bound} holds for any state $\bar\w = \bT(\bar\s)$ for which the quadratic lifted dynamics of the corresponding $\vw$ are exactly equivalent to the non-lifted dynamics (\cref{eq: lifting dynamics equal}). Let $\{\bar\w_k \}_{k=1}^K$ be a collection of snapshots of these exact lifted states and suppose that $\Ur$ is an $r$-dimensional POD basis for the $\bar\w_k$.  We now prove an analogous bound for the projected snapshots $\Ur\Ur^\top\bar\w_k$:

\begin{lemma}
	Assume $\mathbf{J}$ is also Lipschitz and define $\Pr = \Ur\Ur^\top$. Then, there exist constants $C_0$, $C_1$, and $C_2$ such that for all $k$
	\begin{align}
		\big\|\A\Pr\bar\w_k + \H(\Pr\bar\w_k \otimes\Pr\bar\w_k&) - \mathbf{J}(\bT^\dagger(\Pr\bar\w_k))\f(\bT^\dagger(\Pr\bar\w_k))\big\|_2 \nonumber\\
		& \leq C_0n^{\frac12-p} + (C_1 + C_2)\|(\Pr - \mathbf{I}_{nd_w})\bar\w_k\|,
		\label{eq: projected state error}
	\end{align}
	where $\mathbf{I}_{nd_w}$ is the identity matrix of dimension $nd_w$.
	\label{lem: projected state lemma}
\end{lemma}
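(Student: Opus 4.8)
The plan is to reduce the projected-state residual to the unprojected one via the triangle inequality, and then to control the perturbation introduced by the projection using the various Lipschitz and boundedness hypotheses at hand. Writing $R(\w) = \A\w + \H(\w\otimes\w) - \mathbf{J}(\bT^\dagger(\w))\f(\bT^\dagger(\w))$ for the residual functional, the quantity to bound is $\|R(\Pr\bar\w_k)\|_2$. First I would insert $R(\bar\w_k)$ and apply the triangle inequality,
\[
\|R(\Pr\bar\w_k)\|_2 \leq \|R(\bar\w_k)\|_2 + \|R(\Pr\bar\w_k) - R(\bar\w_k)\|_2 .
\]
Since $\bar\w_k$ is an exact lifted snapshot, the hypotheses of \Cref{lem: discrete flow map bound} apply directly to the first term, yielding the $C_0 n^{1/2-p}$ contribution. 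Everything remaining must be absorbed into the projection-error quantity $\|(\Pr - \mathbf{I}_{nd_w})\bar\w_k\|$, which I abbreviate $\|\delta_k\|$ with $\delta_k = (\Pr - \mathbf{I}_{nd_w})\bar\w_k$.

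Next I would split $R(\Pr\bar\w_k) - R(\bar\w_k)$ into a \emph{lifted-dynamics} part and a \emph{reprojected-dynamics} part and bound each separately. The lifted part $\A\delta_k + \H(\Pr\bar\w_k\otimes\Pr\bar\w_k - \bar\w_k\otimes\bar\w_k)$ is handled by linearity of $\A$ together with the Kronecker identity $\mathbf{a}\otimes\mathbf{a} - \mathbf{b}\otimes\mathbf{b} = \mathbf{a}\otimes(\mathbf{a}-\mathbf{b}) + (\mathbf{a}-\mathbf{b})\otimes\mathbf{b}$ applied with $\mathbf{a} = \Pr\bar\w_k$ and $\mathbf{b} = \bar\w_k$; using that $\Pr$ is an orthogonal projector (so $\|\Pr\bar\w_k\|\leq\|\bar\w_k\|$) and that $\|\mathbf{a}\otimes\mathbf{b}\| = \|\mathbf{a}\|\,\|\mathbf{b}\|$, this bounds the lifted part by $\big(\|\A\| + 2\|\H\|\max_k\|\bar\w_k\|\big)\|\delta_k\| =: C_1\|\delta_k\|$. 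For the reprojected part I would add and subtract the cross term $\mathbf{J}(\bT^\dagger(\Pr\bar\w_k))\f(\bT^\dagger(\bar\w_k))$, so that the difference splits into $\mathbf{J}(\bT^\dagger(\Pr\bar\w_k))\big[\f(\bT^\dagger(\Pr\bar\w_k)) - \f(\bT^\dagger(\bar\w_k))\big]$ and $\big[\mathbf{J}(\bT^\dagger(\Pr\bar\w_k)) - \mathbf{J}(\bT^\dagger(\bar\w_k))\big]\f(\bT^\dagger(\bar\w_k))$. Composing the Lipschitz constants of $\bT^\dagger$, $\f$, and $\mathbf{J}$ (the last being the new hypothesis of this lemma), together with the boundedness of $\mathbf{J}$ (from the bounded derivative of $\cT$) and of $\f$ over the snapshot set, bounds this by some $C_2\|\delta_k\|$. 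Collecting the three pieces gives exactly \cref{eq: projected state error}.

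The main obstacle is keeping every constant independent of $k$ while managing the two compositions in the reprojected part. The quantities $\|\f(\bT^\dagger(\bar\w_k))\|$ and $\|\bar\w_k\|$ must be controlled by a maximum over the finite snapshot collection $\{\bar\w_k\}_{k=1}^K$ rather than by a uniform bound over all of $\mathcal{W}$, and the Lipschitz chaining $\bT^\dagger \to \f$ and $\bT^\dagger \to \mathbf{J}$ must be assembled so that the reverse-lifting Lipschitz constant multiplies $\|\delta_k\|$ rather than the intermediate quantity $\|\bT^\dagger(\Pr\bar\w_k) - \bT^\dagger(\bar\w_k)\|$. The Kronecker difference identity is the only genuinely algebraic step; once it is in place, the quadratic term is no harder than the linear one. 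Note finally that $C_1$ and $C_2$ will in general depend on $n$ through $\|\A\|$ and $\|\H\|$, which is permissible here since the lemma requires only independence from $k$.
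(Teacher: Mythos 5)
Your proposal is correct and follows essentially the same route as the paper: the two-step triangle inequality (insert $R(\bar\w_k)$, then split the difference into the polynomial part and the reprojected-dynamics part) reproduces exactly the paper's three-term decomposition, with \Cref{lem: discrete flow map bound} handling the unprojected residual and Lipschitz arguments over the finite snapshot set handling the other two terms. The only difference is that you make the constants explicit (via the Kronecker difference identity and the Lipschitz chaining through $\bT^\dagger$), where the paper simply asserts Lipschitz continuity of the polynomial map and of $\mathbf{J}(\bT^\dagger(\cdot))\f(\bT^\dagger(\cdot))$ over the snapshots.
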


\begin{proof}
	We begin by breaking the left side of \cref{eq: projected state error} into the following terms using the triangle inequality
	\begin{equation}
		\begin{aligned}
			\big\|\A\Pr\bar\w_k + &\H(\Pr\bar\w_k \otimes\Pr\bar\w_k) - \mathbf{J}(\bT^\dagger(\Pr\bar\w_k))\f(\bT^\dagger(\Pr\bar\w_k))\big\| \\
			&\leq\left\|\A\Pr\bar\w_k + \H(\Pr\bar\w_k \otimes\Pr\bar\w_k) - \big(\A\bar\w_k + \H(\bar\w_k \otimes\bar\w_k)\big)\right\| \\
			&\qquad+ \left\|\A\bar\w_k + \H(\bar\w_k \otimes\bar\w_k) - \mathbf{J}(\bT^\dagger(\bar\w_k))\f(\bT^\dagger(\bar\w_k))\right \| \\
			&\qquad+ \left \|\mathbf{J}(\bT^\dagger(\bar\w_k))\f(\bT^\dagger(\bar\w_k)) - \mathbf{J}(\bT^\dagger(\Pr\bar\w_k))\f(\bT^\dagger(\Pr\bar\w_k))\right\|.
		\end{aligned}
		\label{eq: proj state err triangle}
	\end{equation}
	The middle term can be bounded by \Cref{lem: discrete flow map bound}. For the first term, define $\mathbf{Q}(\w) = \A\w + \H(\w\otimes\w)$. Note that since $\mathbf{Q}$ is polynomial, it is Lipschitz over the finite set of snapshots, so there exists a constant $C_1$ such that
	\begin{align}
		\big\|\A\Pr\bar\w_k + \H(\Pr\bar\w_k \otimes\Pr\bar\w_k) - \big(\A\bar\w_k + &\H(\bar\w_k \otimes\bar\w_k)\big) \big\|  \leq C_1\|(\Pr - \mathbf{I}_{nd_w})\bar\w_k\|.
		\label{eq: first term}
	\end{align}
	For the third term on the right side of \cref{eq: proj state err triangle}, since $\mathbf{J}$ and $\f$ are Lipschitz, the function $\mathbf{J}(\bT^\dagger(\cdot))\f(\bT^\dagger(\cdot))$ is also Lipschitz over the finite set of snapshots. Then, for some constant $C_2$,
	\begin{align}
		\big \|\mathbf{J}(\bT^\dagger(\bar\w_k))\f(\bT^\dagger(\bar\w_k)) - \mathbf{J}(\bT^\dagger(\Pr\bar\w_k))&\f(\bT^\dagger(\Pr\bar\w_k))\big\| \leq C_2\|(\Pr - \mathbf{I}_{nd_w})\bar\w_k\|.
		\label{eq: third term}
	\end{align}
	Combining \cref{eq: first term,eq: third term} with \Cref{lem: discrete flow map bound} yields the result.
\end{proof}

This result lets us upper-bound the residual of the Lift \& Learn minimization (\cref{eq: transposed minimization}).

\begin{theorem}
	Let $\sigma_i$ be the singular values of the snapshot data matrix $\W$. Then, the residual of the Lift \& Learn operator inference is bounded as follows:
	\begin{align}
		\min_{\hat\A\in\R^{r\times r},\hat\H\in\R^{r\times r^2}} \frac1K \bigg\| \hat\W^{\top}\hat\A^\top + \left(\hat\W\otimes\hat\W\right)^\top&\hat\H^\top - {\hat\W}'^\top \bigg\|_F^2\nonumber \\
		& \leq \big(C_0n^{\frac12-p} + (C_1+C_2)\varepsilon\big)^2,
		\label{eq: LL opinf bound}
	\end{align}
	where $\varepsilon^2=\sum_{i=r+1}^{nd_w}\sigma_i^2$ and $C_0,C_1,C_2\geq0$ are constants.
	\label{lem: least square bound}
\end{theorem}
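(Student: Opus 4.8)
The plan is to bound the minimum in \cref{eq: transposed minimization} from above by the objective value attained at a single convenient feasible point --- the intrusive POD--Galerkin operators $\hat\A = \Ur^\top\A\Ur$ and $\hat\H = \Ur^\top\H(\Ur\otimes\Ur)$ of \cref{eq: intrusive reduced matrix operators} --- and then to control that value snapshot-by-snapshot using \Cref{lem: projected state lemma}. First I would rewrite the objective in the per-snapshot form already noted after \cref{eq: transposed minimization}: since the squared Frobenius norm equals the sum of squared row-norms and each row of the residual matrix corresponds to one snapshot, the objective is $\frac1K\sum_{k=1}^K\|\hat\A\hat\w_k + \hat\H(\hat\w_k\otimes\hat\w_k) - \hat\w_k'\|_2^2$, with $\hat\w_k = \Ur^\top\bar\w_k$ and $\hat\w_k'$ from \cref{eq: lifted reprojected velocity reduced}.

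Next I would substitute the intrusive operators. Using the Kronecker mixed-product property $(\Ur\otimes\Ur)(\Ur^\top\bar\w_k\otimes\Ur^\top\bar\w_k) = \Pr\bar\w_k\otimes\Pr\bar\w_k$ with $\Pr = \Ur\Ur^\top$, the reduced linear and quadratic terms collapse to $\Ur^\top\big(\A\Pr\bar\w_k + \H(\Pr\bar\w_k\otimes\Pr\bar\w_k)\big)$, while the reduced time-derivative datum is $\hat\w_k' = \Ur^\top\mathbf{J}(\bT^\dagger(\Pr\bar\w_k))\f(\bT^\dagger(\Pr\bar\w_k))$. The per-snapshot residual is thus $\Ur^\top$ applied to precisely the quantity bounded in \cref{eq: projected state error}. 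Because $\Ur$ has orthonormal columns, $\|\Ur^\top\|_2 = 1$, so \Cref{lem: projected state lemma} gives, for each $k$, the bound $C_0 n^{\frac12-p} + (C_1+C_2)\|(\Pr-\mathbf{I}_{nd_w})\bar\w_k\|$ on the residual norm.

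Finally I would square, sum, and simplify. Writing $\alpha = C_0 n^{\frac12-p}$ and $b_k = \|(\Pr-\mathbf{I}_{nd_w})\bar\w_k\|$, expanding $\frac1K\sum_k\big(\alpha + (C_1+C_2)b_k\big)^2$ yields a constant, a cross, and a quadratic term. The quadratic term uses the standard POD identity $\sum_{k=1}^K\|(\mathbf{I}_{nd_w}-\Pr)\bar\w_k\|^2 = \sum_{i=r+1}^{nd_w}\sigma_i^2 = \varepsilon^2$ (projection error equals the discarded singular-value tail), and the cross term is handled by Cauchy--Schwarz, $\sum_k b_k \le \sqrt{K}\,\varepsilon$, giving the three terms $\alpha^2$, $2\alpha(C_1+C_2)\varepsilon/\sqrt{K}$, and $(C_1+C_2)^2\varepsilon^2/K$. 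The main (and only nontrivial) step is the collapse of these into a perfect square: since $K\ge 1$ the weights $1/\sqrt{K}$ and $1/K$ are at most one and all terms are nonnegative, so dropping these weights is legitimate and produces exactly $\big(\alpha + (C_1+C_2)\varepsilon\big)^2$, which is the claimed bound. The Kronecker bookkeeping in the substitution is routine provided it is carried out consistently with the column-wise convention of \cref{eq: matrix definition of khatri-rao}.
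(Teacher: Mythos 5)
Your proposal is correct and follows essentially the same route as the paper: bound the minimum by evaluating the objective at the intrusive operators $\hat\A = \Ur^\top\A\Ur$, $\hat\H = \Ur^\top\H(\Ur\otimes\Ur)$, reduce each per-snapshot residual to $\Ur^\top$ applied to the quantity of \Cref{lem: projected state lemma}, and use $\|\Ur\|_2=1$. The only (harmless) difference is in the final aggregation --- the paper simply uses the pointwise bound $\|(\Pr-\mathbf{I}_{nd_w})\bar\w_k\|\le\varepsilon$ for every $k$ and averages, whereas you invoke the exact POD identity plus Cauchy--Schwarz, which in fact yields the slightly sharper constant $(C_1+C_2)\varepsilon/\sqrt{K}$ before you loosen it to the stated bound.
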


\begin{proof}
	Apply \Cref{lem: projected state lemma} to each projected state snapshot $\Pr \bar\w_k$:
	\begin{align}
		\left \| \A \Pr\bar\w_k + \H(\Pr\bar\w_k \otimes \Pr\bar\w_k) - \mathbf{J}(\bT^\dagger(\Pr\bar\w_k))\f(\bT^\dagger(\Pr\bar\w_k))
		\right\|_2\nonumber\\
		 < C_0n^{\frac12-p} + (C_1+C_2)\|(\Pr-\mathbf{I}_{nd_w})\bar \w_k\|.
	\end{align}
	Then, for each snapshot, 
	\begin{align}
		\left \|\Ur^\top \left(\A \Pr\bar\w_k + \H(\Pr\bar\w_k \otimes \Pr\bar\w_k) - \mathbf{J}(\bT^\dagger(\Pr\bar\w_k))\f(\bT^\dagger(\Pr\bar\w_k)) \right)
		\right\|_2 \nonumber\\
		 \leq C_0n^{\frac12-p} + (C_1+C_2)\|(\Pr-\mathbf{I}_{nd_w})\bar \w_k\|
		\label{eq: single column LS res bound}
	\end{align}
	since $\|\Ur\|_2=1$ because its columns are orthonormal vectors.
	Since the snapshots in the Lift \& Learn data are used to compute the POD basis, 
	\begin{align}
		\|(\Pr-\mathbf{I}_{nd_w})\bar\w_k\| \leq \varepsilon,
	\end{align}
	for all snapshots $\bar\w_k$.
	Thus, taking the mean of the square over all snapshots, 
	\begin{align}
		\frac1K\sum_{k=1}^K \big \|& \Ur^\top \big(\A \Ur\Ur^\top\bar\w_k + \H( \Ur\Ur^\top\bar\w_k \otimes  \Ur\Ur^\top\bar\w_k) \nonumber\\
		& - \mathbf{J}(\bT^\dagger(\Pr\bar\w_k))\f(\bT^\dagger(\Pr\bar\w_k)) \big)
		\big\|_2^2 \leq  \big(C_0n^{\frac12-p} + (C_1+C_2)\varepsilon\big)^2.
		\label{eq: final LS bound}
	\end{align}
	Note that this sum is exactly the objective function in \cref{eq: transposed minimization}. Thus, choosing $\hat\A = \Ur^\top \A \Ur$ and $\hat\H = \Ur^\top\H(\Ur\otimes\Ur)$ in \cref{eq: transposed minimization} would yield an objective value less than $\big(C_0n^{\frac12-p} + (C_1+C_2)\varepsilon\big)^2$, so the minimizer must be bounded by this value as well.
\end{proof}

The least-squares residual is a measure of the mismatch between the postulated quadratic model form and the true dynamics of the system. \Cref{lem: least square bound} shows that this mismatch has an upper bound dependent on two factors: the truncation error of the full model and the projection error of the POD basis.


\section{Results}\label{sec: results}
In this section, the Lift \& Learn method is applied to two different dynamical systems. \Cref{sec: FHN} considers the FitzHugh-Nagumo prototypical neuron activation model and \Cref{sec: Euler} considers the Euler equations for inviscid fluid flow.

\subsection{Application to the FitzHugh-Nagumo system}\label{sec: FHN}
The FitzHugh-Nagumo system was first proposed in 1961~\cite{fitzhugh1961impulses} and then realized as a circuit for electronic pulse transmission in~\cite{nagumo1962active}. The system has been used to study exciteable oscillatory dynamics such as those found in cardiac~\cite{rocsoreanu2012fitzhugh} and neuron modeling~\cite{longtin1993stochastic}, and has become a benchmark problem in nonlinear model reduction~\cite{deim2010}.
\Cref{sec: FHN statement and lifting} introduces the FitzHugh-Nagumo equations and lifts the system to quadratic form. \Cref{sec: numerical experiments} describes the data used for learning and the performance of the model on training and test sets.

\subsubsection{FitzHugh-Nagumo problem statement and lifting transformation}\label{sec: FHN statement and lifting}
The FitzHugh-Nagumo equations are a simplified neuron activation model with two states: $s_1$ represents voltage and $s_2$ represents voltage recovery. We consider the equations using the same parameters as in~\cite{deim2010}:
\begin{subequations}
  \begin{align}
    \gamma\pdv {s_1}t &= \gamma^2 \pdv[2]{s_1} x - s_1^3 + 1.1s_1^2 - 0.1s_1 + s_2 + 0.05,\label{eq: FHN v evolution}\\
    \pdv {s_2}t &= 0.5s_1 - 2s_2 + 0.05, 
  \end{align}
  \label{eq: original FHN equations}
\end{subequations}
where $\gamma=0.015$. The full model solves the system \cref{eq: original FHN equations} on the spatial domain $x\in[0,1]$ for the timeframe $t\in[0,4]$. At $t=0$, the states $s_1$ and $s_2$ are both zero everywhere, and the boundary conditions are given by
\begin{align}
    \pdv {s_1}x\bigg|_{x = 0} = g(t), \qquad \pdv {s_1}x\bigg|_{x = 1} = 0,
\end{align}
where $g(t)$ is a time-dependent input which represents a neuron stimulus.
Because \cref{eq: FHN v evolution} contains a cubic nonlinear term, the model is lifted to quadratic form. Although the operator inference framework developed in~\cite{Peherstorfer16DataDriven} can, in principle, learn cubic and higher-order tensor terms, the number of unknowns in the learning problem can increase exponentially as the polynomial order of the model is increased, if the higher-order operators are dense. 

The following lifting map $\cT$ is used in~\cite{benner2015two,kramer2018lifting} to lift the system to quadratic form:
\begin{align}
	\cT: \begin{pmatrix}
		s_1 \\ s_2 
	\end{pmatrix} \longmapsto
	\begin{pmatrix}
		s_1 \\ s_2 \\ (s_1)^2 
	\end{pmatrix} \equiv
	\begin{pmatrix}
		w_1 \\ w_2 \\ w_3
	\end{pmatrix}.
\end{align}
The lifted system is then given by
\begin{subequations}
  \begin{align}
    \gamma\pdv {w_1}t &= \gamma^2 \pdv[2]{w_1} x - w_1w_3 + 1.1(w_1)^2 - 0.1{w_1} + w_2 + 0.05,\\
      \pdv {w_2}t &= 0.5{w_1} - 2w_2 + 0.05, \\
      \pdv {w_3}t &= 2w_1\pdv {w_1}t\nonumber \\
      &= \frac2\gamma\left(\gamma^2 {w_1}\pdv[2]{w_1}x - w_3^2 + 1.1w_1w_3 - 0.1 w_3 +w_1w_2 +  0.05w_1\right).
  \end{align}
  \label{eq: lifted FHN system}
\end{subequations}
To be consistent with the prescribed initial and boundary conditions for the original state, $w_3$ must be zero everywhere at $t=0$ and its boundary conditions are given by
\begin{align}
  \pdv {w_3}x\bigg|_{x = 0} = 2{w_1}\pdv {w_1}x\bigg|_{x=0} = 2w_1g(t), \qquad \pdv {w_3}x\bigg|_{x = 1} =2{w_1}\pdv {w_1}x\bigg|_{x=L}= 0.
  \label{eq: z boundary condition}
\end{align}
The lifted system in \cref{eq: lifted FHN system} contains only quadratic nonlinear dependencies on the state. Note that no approximation has been introduced in lifting \cref{eq: original FHN equations} to \cref{eq: lifted FHN system}.

We now postulate the form of the lifted reduced model based on the lifted PDE (\cref{eq: lifted FHN system}). In addition to linear and quadratic terms, \cref{eq: lifted FHN system} also contains constant, input, and bilinear terms, so the postulated model form is given by 
\begin{align}
    \dv{\hat\w} t = \hat\A \hat\w + \hat\H (\hat\w \otimes\hat\w) + \hat{\mathbf{N}}\hat\w g(t) + \hat\B g(t) + \hat{\mathbf{C}},
    \label{eq: FHN model form}
\end{align}
where $\hat\A,\hat{\mathbf{N}}\in\R^{r\times r}$, $\hat\H\in\R^{r\times r^2}$, and $\hat{\mathbf{B}},\hat{\mathbf{C}}\in\R^r$.

\subsubsection{Numerical experiments}\label{sec: numerical experiments}
We wish to model the response of the FitzHugh-Nagumo system to inputs of the form
\begin{align}
  g(t) = \alpha t^3 e^{-\beta t}
\end{align}
where the parameter $\alpha$ varies log-uniformly between 500 and 50000, and the parameter $\beta$ varies uniformly on the range $[10,15]$. 
To train our Lift \& Learn model of the form \cref{eq: FHN model form}, snapshot data from nine simulations of the original equations (\cref{eq: original FHN equations}) corresponding to the parameters $\alpha = [500, 5000, 50000]$ and $\beta = [10, 12.5, 15]$ are generated. The simulation outputs the system state $s_1$ and $s_2$ on a uniform spatial grid with $n=512$ nodes. The data are recorded every 0.01 seconds, yielding 400 state snapshots for each simulation, with a total of 3600 snapshots used for learning. The lifting map is applied to the state data to obtain lifted data for $w_1$, $w_2$, and $w_3$. Separate POD bases are computed for each lifted state variable. That is, if $\W^{(1)},\W^{(2)},\W^{(3)}\in\R^{512\times 3600}$ denote the snapshot data in $w_1$, $w_2$, and $w_3$, respectively, then
\begin{align}
   \W^{(1)} = \U^{(1)}\mathbf{\Sigma}^{(1)}\mathbf{V}^{(1)^\top}, \quad
   \W^{(2)} = \U^{(2)}\mathbf{\Sigma}^{(2)}\mathbf{V}^{(2)^\top}, \quad
   \W^{(3)} = \U^{(3)}\mathbf{\Sigma}^{(3)}\mathbf{V}^{(3)^\top}.
\end{align} 
The number of modes to retain in our low-dimensional model is determined by examining the quantity
\begin{align}
 	1 - \sum_{i=r+1}^{nd_w}\sigma_i^2 \bigg/\sum_{i=1}^{nd_w}\sigma_i^2,
 	\label{eq: energy svd}
\end{align}
where $\sigma_i$ are the singular values of the data. The quantity in~\cref{eq: energy svd} is the relative energy of the unretained modes of the training data, or the energy lost in truncation. The energy spectrum of the training data in each separate lifted state variable is shown in \Cref{fig: FHN energy}. The required numbers of modes needed to capture 99.9\%, 99.99\%, 99.999\%, and 99.9999\% of the energy in the data are tabulated in \Cref{tab: FHN sizes}, along with the corresponding Lift \& Learn model dimensions.

\begin{figure}[h]
	\centering
	\includegraphics[width=0.8\textwidth]{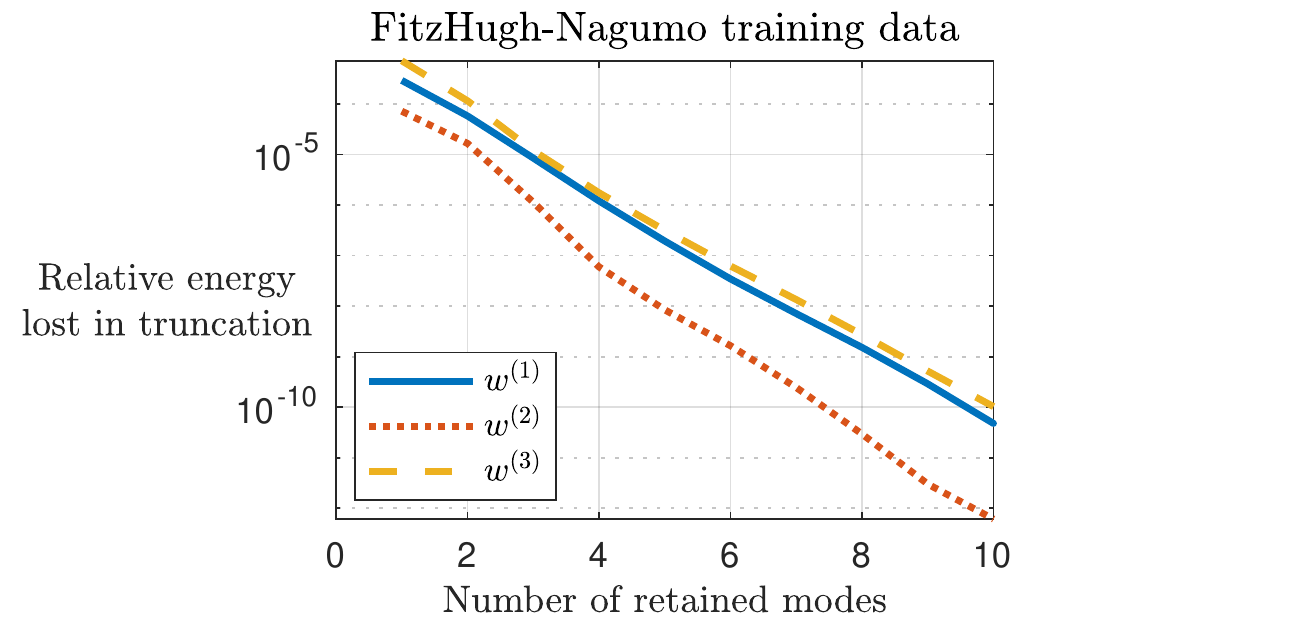}
  	\caption{Energy spectrum of FitzHugh-Nagumo training data. }\label{fig: FHN energy}
\end{figure}

\begin{table}[h]
	\centering
	\begin{tabular}{l c c c c}
    &  \multicolumn{4}{c}{\shortstack{\# modes  required}}  \\
    Retained energy & $w_1$ & $w_2$ & $w_3$ & Total\\ 
    \hline
    99.9\phantom{9999}\% & 1 & 1 & 1 & 3 \\
    99.99\phantom{999}\% & 2 & 1 & 3 & 6 \\
    99.999\phantom{99}\% & 3 & 3 & 4 & 10 \\
    99.9999\phantom{9}\% & 5 & 4 & 5 & 14
  \end{tabular}
  \caption{Number of modes required to retain different amounts of energy in training data. The total number of modes corresponds to the Lift \& Learn model dimension.}\label{tab: FHN sizes}
\end{table}

For each of the model sizes in \Cref{tab: FHN sizes}, the data are generated by evaluating the original non-quadratic model and lifting the data, as described in \Cref{ssec: data acquisition}. The corresponding input data $\mathbf{G}$ and bilinear state-input data $\hat\W\mathbf{G}$ are also collected. The state, time derivative, input, and bilinear state-input data
are then used to learn a model of the form \cref{eq: FHN model form}. Additionally, we exploit knowledge of the governing equations to enforce block-sparsity in the least-squares solution; for example, there are only linear and constant terms in the evolution of $w_2$, so for the reduced state components corresponding to $w_2$, only the linear and constant terms are inferred.

The training set consists of the nine training trajectories described above. Two test sets are considered: one in which 100 trajectories are generated with $\alpha$ and $\beta$ realizations randomly drawn from their distributions above, in the same regime as the training set, and a second test set in which 100 trajectories are generated from a different parameter regime, with $\alpha$ varying log-uniformly on $[50000,5e6]$ and $\beta\sim\mathcal{U}([15, 20])$. Training and test errors are shown in \Cref{fig: FHN training error}. For each training and test input, the error relative to the solution of the original full model, $\S_\text{orig}$, is calculated by solving the reduced model to generate predictions, then reconstructing the full lifted state from the Lift \& Learn reduced trajectory, and finally reversing the lifting to obtain $\S_\text{L\&L}$, the Lift \& Learn prediction of the trajectory in the original full state space. The relative error is given by
\begin{align}
   \frac{\|\S_\text{L\&L}-\S_\text{orig}\|_F}{\|\S_\text{orig}\|_F}.
\end{align} 
The relative error of the intrusive lifted POD reduced model (obtained by discretizing the lifted PDE and then reducing, as in~\cite{kramer2018lifting}) is shown for reference.

\begin{figure}[h]
	\centering
	\includegraphics[width = \textwidth]{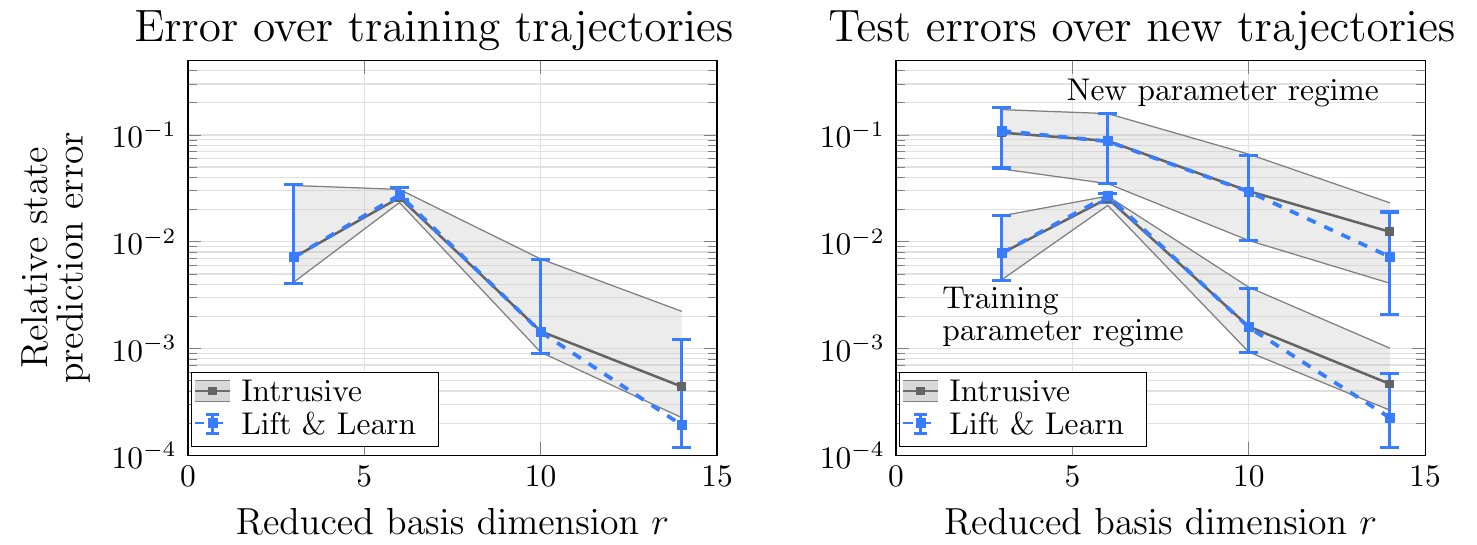}
	\caption{Lift \& Learn model prediction error for FitzHugh-Nagumo system. Comparison to intrusive lifted POD reduced model performance for (left) the nine training trajectories and (right) two test sets of 100 new trajectories: one set drawn from the same parameter regime as the training trajectories and the other set drawn from a completely new parameter regime. Median and first/third quartile errors are shown.}
	\label{fig: FHN training error}
\end{figure}


For both test sets and the training set, the Lift \& Learn model error is nearly the same as the lifted POD reduced model error for $r\leq10$. The test errors for the first test set are of similar magnitude to the training errors, demonstrating the ability of the model to generalize to new inputs. For the second test set, the Lift \& Learn model errors are higher than for the training set, but similar to the errors obtained by the intrusive lifted POD reduced model. In this case, the accuracy of the reduced model is limited by the ability of the POD basis computed from trajectories in one parameter regime to represent trajectories in a new parameter regime. The ability of the Lift \& Learn model to recover the accuracy of the lifted POD reduced model is a key contribution of this work because it allows analyzable quadratic reduced models to be derived for nonlinear systems where the lifted full model is not available.

\subsection{Application to the Euler equations}\label{sec: Euler}
The fluid dynamics community has long recognized the utility of alternative variable representations. While the Euler and Navier-Stokes equations are most commonly derived in conservative variables, where each state is a conserved quantity (mass, momentum, energy), symmetric variables have been exploited to guarantee stable models~\cite{HUGHES1986variableTransformationNS}, and the quadratic structure of the specific volume variable representation has been exploited in~\cite{balajewicz2016minimal} to allow a model stabilization procedure to be applied. In \Cref{sec:liftEuler} the Euler equations are presented in the typical conservative formulation as well as in the quadratic specific volume formulation. \Cref{sec: euler num exp} describes the test problem and presents Lift \& Learn results when applied to this problem.

\subsubsection{Euler equations and specific volume transformation}\label{sec:liftEuler}
The one-dimensional Euler equations in the conservative variables are given by
\begin{align}
	\frac{\partial}{\partial t} \begin{pmatrix}
		\rho \\ \rho u\\ \rho e
	\end{pmatrix} 
	=-\frac{\partial}{\partial x}\begin{pmatrix}
		\rho u \\ \rho u^2 + p \\ (\rho e+p)u
	\end{pmatrix},
	\label{eq: Euler equations in 1 dimension conservative}
\end{align}
where the state $\vs = \begin{pmatrix}
	\rho & \rho u & \rho e
\end{pmatrix}^\top$ is comprised of the density $\rho$, specific momentum $\rho u$, and total energy $\rho e$. The equation of state $\rho e = \frac{p}{\gamma-1} + \frac12\rho u^2$ relates energy and pressure via the heat capacity ratio $\gamma$. \Cref{eq: Euler equations in 1 dimension conservative} contains non-polynomial nonlinearities in the conservative state. However, the Euler equations can be alternatively formulated in the specific volume state representation:
\begin{subequations}
	\begin{align}
		\pdv u t& = -u \pdv ux -\zeta \pdv px, \\
		\pdv pt & = -\gamma p \pdv ux -u\pdv px,\\
		\pdv \zeta t & = -u\pdv \zeta x + \zeta\pdv ux,
	\end{align}
	\label{eq: lifted primitive equations}
\end{subequations}
where $\zeta = \frac1\rho$ is the specific volume, $u$ the velocity, and $p$ pressure. That is, the map $\cT$ is given by
\begin{align}
	\cT: \begin{pmatrix}
		\rho \\ \rho u \\ \rho e
	\end{pmatrix}
	\longmapsto
	\begin{pmatrix}
		u \\ p \\ 1/\rho
	\end{pmatrix}\equiv \vw.
\end{align}
For constant $\gamma$, \cref{eq: lifted primitive equations} contains only quadratic nonlinear dependencies on the state and its spatial derivatives. This is a special case where a quadratic representation can be achieved via nonlinear state transformations without the addition of auxiliary variables, so $d_s = d_w$ and the map $\cT$ is invertible.
Note that \cref{eq: lifted primitive equations} can be extended to the three-dimensional Euler setting by adding the $y$- and $z$-velocity equations.

\subsubsection{Numerical experiments}\label{sec: euler num exp}
\Cref{eq: Euler equations in 1 dimension conservative} is solved on the periodic domain $x\in[0,2)$ with mesh size $\Delta x = 0.01$ from $t=0$ to $t=0.01$ with timestep $\Delta t = 10^{-5}$. The initial pressure is 1 bar everywhere. The initial density is a periodic cubic spline interpolation at the points $x =0, \frac23, \frac43$, where the interpolation values are drawn from a uniform distribution on the interval $[20, 24]$ kg/$\text{m}^3$. The initial velocity is also a periodic cubic spline interpolation of values at the same $x$-locations with interpolation values drawn from a uniform distribution on the interval [95, 105] m/s. The initial condition is therefore parametrized by six degrees of freedom: three for density and three for velocity.

A training data set of 64 trajectories is constructed using initial conditions corresponding to the 64 corners of the parameter domain. This data set is then transformed to the specific volume representation and non-dimensionalized. For this test problem, we use a single POD basis to represent the entire state. The energy spectrum of the non-dimensional training data is plotted in \Cref{fig: Euler energy}. The numbers of modes required to retain different levels of energy are tabulated in \Cref{tab: Euler sizes}.

\begin{figure}[h]
  \centering
  \begin{minipage}{0.55\textwidth}
  \centering
  \includegraphics[width=0.9\textwidth]{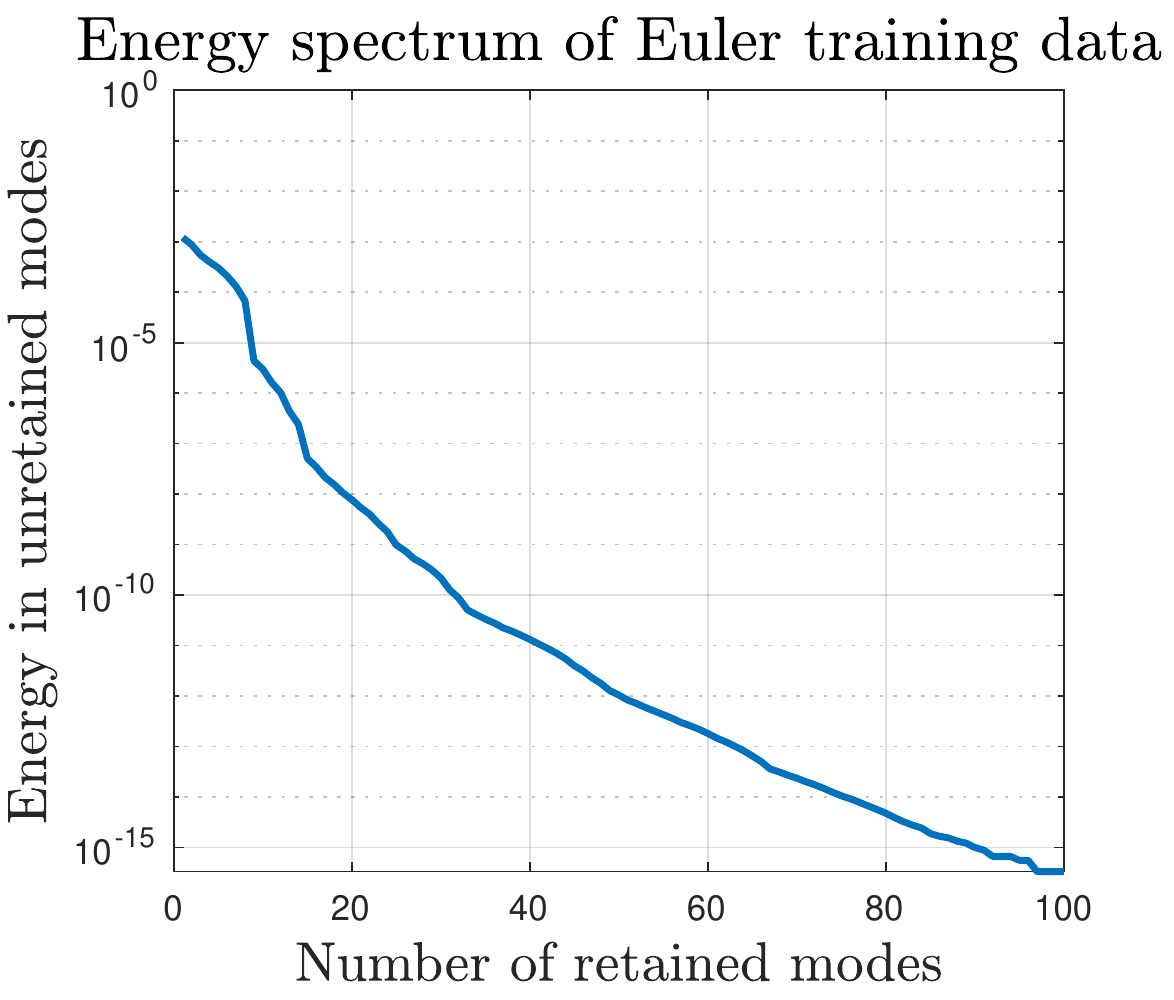}
	\caption{POD energy spectrum of Euler training data set (all state variables)}\label{fig: Euler energy}
  \end{minipage}
  \hfill
  \begin{minipage}{0.43\textwidth}
  \centering
  \captionsetup{type=table} 
  \begin{tabular}{c c }
    Retained energy & \# modes required\\ 
    \hline
    $1-10^{-3}$& 2 \\
    $1-10^{-5}$& 9\\
    $1-10^{-7}$ & 15 \\
    $1-10^{-9}$ & 25
  \end{tabular}
  \caption{Number of modes required to retain energy of training data. The total number of modes corresponds to the Lift \& Learn model dimension.}\label{tab: Euler sizes}
  \end{minipage}
\end{figure}

Note that the transformed system in \cref{eq: lifted primitive equations} contains no linear dependencies on the state, so only a quadratic operator $\hat\H$ is inferred. The inferred reduced model thus has the form
\begin{align}
	\dv{\hat\w} t = \hat\H(\hat\w \otimes \hat\w).
\end{align}
The Lift \& Learn reduced model is then used to predict the state evolution for the training data set as well as a test data set. The test set consists of 100 trajectories based on initial conditions drawn randomly from their distributions. Median and first and third quartile relative errors over the training and test sets are shown in \Cref{fig: euler state error}. The performance of a lifted POD reduced model is shown for reference. 

\begin{figure}[h]
	\centering
	\includegraphics[width=\textwidth]{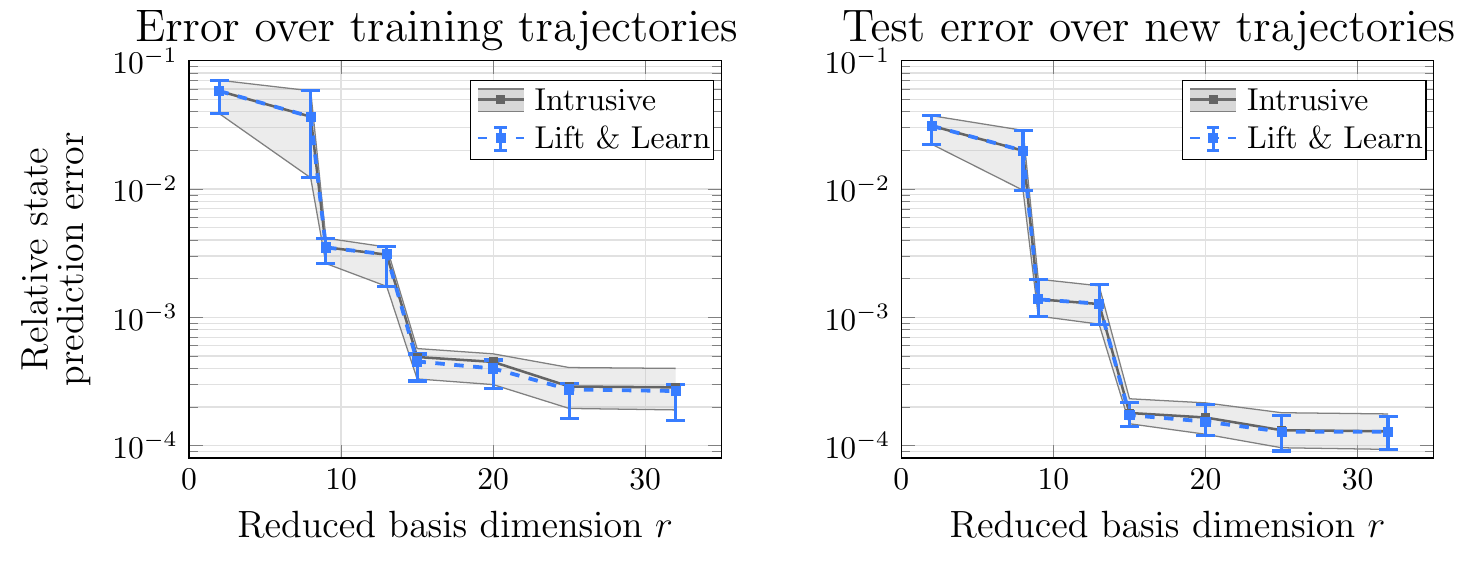}
	\caption{Lift \& Learn model prediction error for Euler equations and comparison to intrusive lifted POD reduced model performance. Errors over the 64 training trajectories are plotted on the left and test error over 100 new test trajectories are plotted on the right. Median, first and third quartile errors are shown. }
	\label{fig: euler state error}
\end{figure}

The Lift \& Learn models are stable, accurate, and generalizable, achieving an error under 0.1\% on both the training and test sets, which is similar to that of the lifted POD reduced model. 
For this particular training and test set, both the intrusive and the Lift \& Learn models actually achieve a lower test error than training error. This may be because the state data of the test set generated from random parameters uniformly distributed across their domains is more accurately represented by the low-dimensional POD basis than the state data of the training set generated by parameters at the domain extremes. We emphasize that it is our analysis bounding difference between the residuals of the Lift \& Learn model and intrusive reduced models that allows us to interpret the learned model behavior through the lens of projection-based reduced models in this way.
Again, the lifted POD reduced model approach is usually not viable because the lifted full model is generally not available. The non-intrusive ability to recover the generalizability of the intrusive reduced model is a key contribution of our method. 

\section{Conclusions}
For machine learning tools to achieve their full potential in scientific computing, they must be reliable, robust, and generalizable. Approaches which incorporate domain knowledge into the learning problem can help achieve these goals.
We have presented Lift \& Learn, a new domain-aware learning method which uses lifting transformations to expose quadratic structure in a nonlinear dynamical system. This structure is then exploited to learn low-dimensional models which generalize well to conditions outside of their training data. 
Our method differs from previous works that use lifting to obtain reduced models in that our approach learns the quadratic reduced model from data generated by a model in the original nonlinear variables; we do not require the availability of a lifted full model. In contrast to learning approaches that fit models with arbitrary architectures to data, our physics-informed approach fits a quadratic model that respects the physics in the lifted variables, so that the learned model residual can be upper bounded by the truncation error of the full model and the projection error of the reduced basis.
Numerical experiments on two different test problems demonstrate the ability of our learned models to recover the robustness and accuracy of the intrusive reduced models. 
However, several open questions remain and are intriguing subjects for future work, including theoretical questions of the existence of quadratic liftings, the minimal dimension of the lifted state, the automated discovery of lifted coordinates, the use of our framework to learn models from noisy or experimental data in addition to or instead of simulation data, and the incorporation of additional physical knowledge such as symmetries or stability to the learning problem via adding constraints to the optimization.


\section*{Acknowledgments}
This work was supported in part by the US Air Force Center of Excellence on Multi-Fidelity Modeling of Rocket Combustor Dynamics award FA9550-17-1-0195, the Air Force Office of Scientific Research MURI on managing multiple information sources of multi-physics systems awards FA9550-15-1-0038 and FA9550-18-1-0023, the US Department of Energy Applied Mathematics MMICC Program award DESC0019334, and the SUTD-MIT International Design Centre. The first author also acknowledges support from the National Science Foundation Graduate Research Fellowship Program and the Fannie and John Hertz Foundation.
The third author was partially supported by the US Department of Energy, Office of Advanced Scientific Computing Research, Applied Mathematics Program (Program Manager Dr. Steven Lee), DOE Award DESC0019334.

\appendix




\bibliographystyle{model1-num-names}
\bibliography{sample.bib}







\end{document}